\DeclareMathOperator{\Ima}{Im}
\newtheorem{theorem}{Theorem}[section]
\newtheorem{lemma}[theorem]{Lemma}
\newtheorem{corollary}[theorem]{Corollary}
\newtheorem{question}[theorem]{Question}
\theoremstyle{definition}
\newtheorem{remark}[theorem]{Remark}
\begin{document}

\title[The maximum and minimum genus of a multibranched surface]{The maximum and minimum genus of a multibranched surface}

\author{Mario Eudave-Mu\~{n}oz}
\address{Instituto de Matem\'{a}ticas, Universidad Nacional Aut\'onoma de M\'exico, Circuito Exterior, Ciudad Universitaria, 04510 M\'exico D.F.,  Mexico}
\email{mario@matem.unam.mx}
\thanks{The first author is partially supported by grant PAPIIT-UNAM 116720}

\author{Makoto Ozawa}
\address{Department of Natural Sciences, Faculty of Arts and Sciences, Komazawa University, 1-23-1 Komazawa, Setagaya-ku, Tokyo, 154-8525, Japan}
\email{w3c@komazawa-u.ac.jp}
\thanks{The second author is partially supported by Grant-in-Aid for Scientific Research (C) (No. 17K05262) and (B) (No. 16H03928), The Ministry of Education, Culture, Sports, Science and Technology, Japan}

\subjclass[2010]{Primary 57M25; Secondary 57M27}

\keywords{link, punctured sphere, cabling conjecture, incompressible surface, essential surface, multibranched surface}

\begin{abstract}
In this paper, we give a lower bound for the maximum and minimum genus of a multibranched surface by the first Betti number and the minimum and maximum genus of the boundary of the neighborhood of it, respectively.
As its application, we show that the maximum and minimum genus of $G\times S^1$ is equal to twice of the maximum and minimum genus of $G$ for a graph $G$, respectively.
This provides an interplay between graph theory and 3-manifold theory.
\end{abstract}

\maketitle


\section{Introduction}

\subsection{Definition of multibranched surfaces}

Let $\Bbb{R}^2_+$ be the closed upper half-plane $\{(x_1,x_2)\in \Bbb{R}^2 \mid x_2\ge 0\}$.
The {\em multibranched Euclidean plane}, denoted by $S_i$ $(i\ge 1)$, is the quotient space obtained from $i$ copies of $\Bbb{R}^2_+$ by identifying with their boundaries $\partial \Bbb{R}^2_+=\{(x_1,x_2)\in\Bbb{R}^2\mid x_2=0\}$ via the identity map.
See Figure \ref{model} for the multibranched Euclidean plane $S_5$.

\begin{figure}[htbp]
	\begin{center}
	\includegraphics[trim=0mm 0mm 0mm 0mm, width=.4\linewidth]{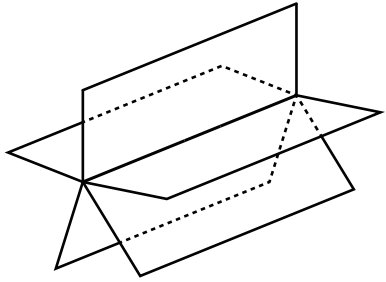}
	\end{center}
	\caption{The multibranched Euclidean plane $S_5$}
	\label{model}
\end{figure}


A second countable Hausdorff space $X$ is called a {\em multibranched surface} if $X$ contains a disjoint union of simple closed curves $l_1,\ldots, l_n$ satisfying the following:
\begin{itemize}
\item For each point $x\in l_1\cup \cdots \cup l_n$, there exist an open neighborhood $U$ of $x$ and a positive integer $i$ such that $U$ is homeomorphic to $S_i$.
\item For each point $x\in X-(l_1\cup\cdots\cup l_n)$, there exists an open neighborhood $U$ of $x$ such that $U$ is homeomorphic to $\Bbb{R}^2$.
\end{itemize}

\subsection{A construction of a multibranched surface by a covering map}

To construct a multibranched surface, 
we prepare a closed $1$-dimensional manifold $L$, a compact $2$-dimensional manifold $E$ and a continuous map $\phi: \partial E \to L$ satisfying the following conditions.
\begin{enumerate}
\item For every connected component $e$ of $E$, $\partial e \not= \emptyset$.
\item For every connected component $c$ of $\partial E$, the restriction $ \phi|_c : c \to \phi(c) $ is a covering map.
\end{enumerate}

The quotient space $X=L \cup_\phi E$ is called the {\it multibranched surface} obtained from the triple $(L, E; \phi)$. We note that $L$ and $E$ are not necessarily connected.

A connected component of $L$ (resp.~$E$, $\partial E$) is said to be a {\it branch} (resp.~{\it sector}, {\it prebranch}) of $X$. We note that every branch of $X$ is homeomorphic to the $1$-sphere $S^1$. The set consisting of all branches (resp.~sectors) is denoted by $\mathcal{L}(X)$ (resp.~$\mathcal{E}(X)$).

For a prebranch $c$ of a multibranched surface $X$, the covering degree of the covering map $\phi|_c:c\to \phi(c)$ is called the {\it degree} of $c$, denoted by $d(c)$. We note that $d(c)$ is a positive integer.
We give an orientation for each branch and each prebranch $c$ of $X$ (In the case that every sector $s$ is orientable, the orientation of $c$ is induced by that of $s$). 
The {\em oriented degree} of a prebranch $c$ of $X$ is defined as follows: if the covering map $\phi|_c:c\to \phi(c)$ is orientation preserving, the {\em oriented degree} $od(c)$ of $c$ is defined by $od(c)=d(c)$ and if it is orientation reversing, the oriented degree is defined by $od(c)=-d(c)$.

A prebranch $c$ of $X$ is said to be {\it attached} to a branch $l$ if $\phi(c)=l$.
We denote by $\mathcal{A}(l)$ the set consisting of all prebranches which are attached to a branch $l$ and the number of elements of $\mathcal{A}(l)$ is called the {\it index} of $l$, denoted by $i(l)$.

\subsection{Regular multibranched surfaces}

A multibranched surface $X$ is {\it regular} if for every branch $l$ and every prebranch $c$ and $c'$ of $X$ which are attached to $l$, $d(c)=d(c')$.

Let $X$ be a regular multibranched surface. Since each pair of prebranches $c, c'$ of $X$ which are attached to a branch $l$ has same degree, then we define the {\it degree} of a branch $l$ as $d(l)=d(c)=d(c')$.

It was shown in \cite{MO} (cf. \cite[Corollary 2.4]{RBS}) that a multibranched surface is embeddable in some closed orientable $3$-dimensional manifold if and only if the multibranched surface is regular.

\subsection{Circular permutation system and slope system}

In this paper, the cardinality of a set $S$ is denoted by $\# S$. A {\it permutation} of $S$ is a bijection from the additive group $\mathbb{Z} /n \mathbb{Z}$ into $S$. Two permutations $\sigma$ and $\sigma'$ of $S$ are {\it equivalent} if there is an element $k \in \mathbb{Z} /n \mathbb{Z}$ such that $\sigma'(x) = \sigma (x+k)$ ($x \in \mathbb{Z} /n \mathbb{Z}$). An equivalent class of a permutation of $S$ is a {\it circular permutation}.

For a regular multibranched surface $X$, we define the ``circular permutation system'' and ``slope system'' of $X$ as follows.
A circular permutation of $\mathcal{A}(l)$ is called a {\it circular permutation} on a branch $l$.
A collection $\mathcal{P}=\{ \mathcal{P}_l \}_{l \in \mathcal{L}(X)}$ is called a {\it circular permutation system} of $X$ if $\mathcal{P}_l$ is a circular permutation on $l$.
For a branch $l$, a rational number $p/q$ with $q=d(l)$ is called a {\it slope} of $l$.
A collection $\{ \mathcal{S}_l \}_{l \in \mathcal{L}(X)}$ is called a {\it slope system} of $X$ if $\mathcal{S}_l$ is a slope of $l$.

\subsection{Neighborhoods of a multibranched surface}

We define a compact $3$-dimensional manifold with boundary, called a ``neighborhood'' of a regular multibranched surface $X$.
This is uniquely determined up to homeomorphism by a pair of a circular permutation system $ \mathcal{P} = \{ \mathcal{P}_l \}_{ l \in \mathcal{L}(X) } $ and a slope system $\mathcal{S}=\{ \mathcal{S}_l \}_{ l \in \mathcal{L}(X) }$ of $X$.

Let $X=L \cup_\phi E$ be a regular multibranched surface and let $\mathcal{P}=\{ \mathcal{P}_l \}_{l \in \mathcal{L}(X)}$ and $\mathcal{S}=\{ \mathcal{S}_l \}_{ l \in \mathcal{L}(X) }$ be a permutation system and a slope system of $X$ respectively. We will construct the $3$-dimensional manifold by the following procedure. First, for each branch $l$ of $X$ and each sector $e$ of $X$ we take a solid torus $l \times D^2$, where $D^2$ is a disk and take a product $e \times [-1, 1]$. If $e$ is non orientable, we take a twisted $I$-bundle $e \tilde{\times} [-1, 1]$ over $e$. We give orientations for these $3$-dimensional manifolds. {Next, we glue them together depending on the permutation system $\mathcal{P}$ and the slope system $\mathcal{S}$, where we assign the slope $\mathcal{S}_l$ of $l$ to the isotopy class of a loop $k_l$ in $\partial ( l \times D^2)$,
by an orientation reversing continuous map $\Phi: \partial E \times [-1, 1] \to \partial (L \times D^2)$ satisfying that for every branch $l$ and every prebranch $c$ with $\phi(c)=l$, the restriction $\Phi |_{c \times [-1, 1]} : c \times [-1, 1] \to N \left(k_l; \partial \left( l \times D^2 \right) \right)$ is a homeomorphism.}
Then, we uniquely obtain a compact and orientable $3$-dimensional manifold with boundary, denoted by $N(X; \mathcal{P}, \mathcal{S})$. The $3$-dimensional manifold $N(X; \mathcal{P}, \mathcal{S})$ is called the {\it neighborhood} of $X$ with respect to $\mathcal{P}$ and $\mathcal{S}$. The set consisting of all neighborhoods of $X$ is denoted by $\mathcal{N}(X)$.

\subsection{The maximum and minimum genus of a multibranched surface}

A {\it compression body} $V$ is an orientable compact 3-manifold obtained from $F \times [0, 1]$ by attaching 2-handles along pairwise disjoint loops on $ F \times \{1\}$ and attaching 3-handles along some resulting spheres where $F$ is an orientable closed surface. The subspaces $F \times \{0\}$ and $\partial V - (F \times \{0\})$ of $V$ are denoted by $\partial_+ V$ and $\partial_- V$ respectively. We note that $V$ is a handlebody if $\partial_- V = \emptyset $. For any orientable compact 3-manifold $M$, there are two compression bodies $V$ and $W$ such that $M= V \cup_S W$ is obtained by gluing $V$ and $W$ on the closed surface $S$ where $S=\partial_+ V=\partial_+ W$.

The decomposition $M=V \cup_S W$ is called a {\it Heegaard splitting} and the orientable closed surface $S$ is called a {\it Heegaard surface} of $M$. The minimal genus of all Heegaard surfaces of $M$ is the {\it Heegaard genus} of $M$, denoted by $g(M)$. 
For an orientable compact 3-manifold $N$ with boundary, the minimal Heegaard genus of closed orientable $3$-dimensional manifolds into which $N$ is embeddable is denoted by $eg(N)$, called the {\it embeddable genus} of $N$.
It is shown in \cite[Proposition 3.1]{MO} that $eg(N)\le g(N)$, where $g(N)$ denotes the minimal genus of Heegaard splittings of $N$ in a sense of Casson--Gordon (\cite{CG}).

For a regular multibranched surface $X$, we define the {\it minimum genus} $\min g(X)$ and {\it maximum genus} $\max g(X)$ respectively as follows.

\[
\min g(X)=\min \{ eg(N) \mid N \in \mathcal{N}(X) \}
\]
\[
\max g(X)=\max \{ eg(N) \mid N \in \mathcal{N}(X) \}
\]

\subsection{Upper bound for the maximum and minimum genus}

The following theorems give a upper bound for the maximum and minimum genus of a multibranched surface.

\begin{theorem}[{\cite[Theorem 3.5]{MO}}]
For a regular multibranched surface $X$, we have the following inequality.
\[
\max g(X) \le |\mathcal{L}(X)| + |\mathcal{E}(X)|
\]
\end{theorem}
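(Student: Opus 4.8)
The plan is to bound $eg(N)$ for every neighborhood $N\in\mathcal{N}(X)$ uniformly and then pass to the maximum. Since the right-hand side $|\mathcal{L}(X)|+|\mathcal{E}(X)|$ does not depend on the choice of circular permutation and slope systems, it suffices to show that each $N=N(X;\mathcal{P},\mathcal{S})$ embeds in a closed orientable $3$-manifold $M$ with $g(M)\le |\mathcal{L}(X)|+|\mathcal{E}(X)|$. By definition $eg(N)\le g(M)$ for any such $M$, so this would give $\max g(X)=\max_{N}eg(N)\le |\mathcal{L}(X)|+|\mathcal{E}(X)|$. Note that I would bound $eg(N)$ directly by an explicit embedding rather than through $eg(N)\le g(N)$: the boundary of $N$ can have arbitrarily large genus coming from the sectors, so $g(N)$ itself is not small.

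To build $M$ I would start from the canonical decomposition of $N$ arising from its construction: the branch solid tori $l\times D^2$ (one for each $l\in\mathcal{L}(X)$) and the sector $I$-bundles $e\times[-1,1]$ (one for each $e\in\mathcal{E}(X)$), glued along the annuli $\Phi(c\times[-1,1])$ over the prebranches. Each branch solid torus is a genus-one handlebody, and should contribute $1$ to the genus, accounting for the term $|\mathcal{L}(X)|$. The essential point is how to account for the sectors: although $e\times[-1,1]$ is again a handlebody, its genus $b_1(e)$ can be arbitrarily large, so one cannot simply add these genera. The idea is to cap the free part of $\partial(e\times[-1,1])$ (the portion not glued to branch solid tori) by a complementary handlebody inside $M$, chosen so that the $1$-handles of $e\times[-1,1]$ are cancelled by the dual $2$-handles of the cap. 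In this way the internal genus of each sector is absorbed into an essentially standard $S^3$-summand, and each sector contributes only a bounded amount to $g(M)$.

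Concretely, I would assemble a handle decomposition of $M$ with a single $0$-handle and a single $3$-handle, arranging that after all cancellations the surviving $1$-handles are in bijection with $\mathcal{L}(X)\cup\mathcal{E}(X)$: one $1$-handle carrying the core of each branch solid torus, and one surviving $1$-handle per sector recording how that sector attaches to the rest of $N$, its remaining internal $1$-handles being killed by the capping $2$-handles. Taking $V$ to be the union of the $0$-handle with these $1$-handles and $W$ its complementary compression body then exhibits a Heegaard surface of genus $|\mathcal{L}(X)|+|\mathcal{E}(X)|$ (recall that for a decomposition with one $0$-handle and one $3$-handle the Heegaard genus equals the number of $1$-handles), so $g(M)\le |\mathcal{L}(X)|+|\mathcal{E}(X)|$.

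The main obstacle is precisely this bookkeeping. One must simultaneously verify that capping the free boundary of each sector by a handlebody is compatible with the gluing annuli prescribed by $\mathcal{P}$ and $\mathcal{S}$, so that the resulting $M$ is a well-defined closed orientable $3$-manifold containing $N$; and control the annuli along which sectors meet branches, ensuring that the full system of gluings leaves exactly one surviving $1$-handle per piece rather than one per prebranch. Getting the count down to $|\mathcal{L}(X)|+|\mathcal{E}(X)|$, rather than the a priori larger quantities involving $\sum_e b_1(e)$ or the total number of prebranches, is the crux. The slopes and circular permutations affect only the homeomorphism type of $M$, not this genus estimate, which is why the bound is uniform over all of $\mathcal{N}(X)$.
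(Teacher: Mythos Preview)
The paper does not contain a proof of this statement: it is quoted verbatim as \cite[Theorem 3.5]{MO} and used as background, with no argument given here. So there is no proof in the present paper to compare your proposal against; the original argument lives in the Matsuzaki--Ozawa paper.

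As for the proposal itself, it is a plausible outline but, as you yourself say, the crux is the bookkeeping you have not carried out. Two remarks. First, your assertion that ``$g(N)$ itself is not small'' because $\partial N$ may have large genus is not obviously correct: in the Casson--Gordon setting the boundary components sit in $\partial_- V\cup\partial_- W$, so large boundary genus does not by itself force large Heegaard genus of $N$. In fact the route taken in \cite{MO} goes through bounding $g(N)$ directly, and then invoking $eg(N)\le g(N)$; your detour around $g(N)$ is unnecessary. Second, the heart of the matter---arranging that after capping and cancellation exactly one $1$-handle survives per branch and one per sector, independently of the number of prebranches and of the genera of the sectors---is asserted but not demonstrated. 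Until you actually exhibit the cancelling $2$-handles and check that the gluings dictated by $\mathcal{P}$ and $\mathcal{S}$ do not obstruct the cancellations, this remains a plan rather than a proof.
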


\begin{theorem}[{\cite[Theorem 3.6]{MO}}]
For a regular multibranched surface $X$, for any $N \in \mathcal{N}(X)$,
\[
eg(N) \le rank H_1(G_N) + g(\partial N),
\]
where $G_N$ denotes the abstract dual graph of $N$ (defined in \cite{MO}). Therefore, we have
\[
\max g(X) \le \max_{N \in \mathcal{N}(X)} \{rank H_1(G_N) + g(\partial N)\}
\]
\[
\min g(X) \le \min_{N \in \mathcal{N}(X)} \{rank H_1(G_N) + g(\partial N)\}
\]
\end{theorem}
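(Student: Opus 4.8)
The plan is to prove the single inequality $eg(N)\le \operatorname{rank}H_1(G_N)+g(\partial N)$ and then read off the two displayed inequalities by taking the maximum and the minimum over $N\in\mathcal N(X)$. Since it is already recorded in the excerpt that $eg(N)\le g(N)$, where $g(N)$ is the Casson--Gordon Heegaard genus, it suffices to produce a Heegaard splitting of $N$ in the Casson--Gordon sense of genus at most $\operatorname{rank}H_1(G_N)+g(\partial N)$. I would build such a splitting by hand from the defining decomposition of $N$.

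Recall that $N=N(X;\mathcal P,\mathcal S)$ is assembled from the solid tori $l\times D^2$ (one per branch $l$) and the $I$-bundles over the sectors $e$ (each a handlebody), glued to one another along the annuli $\Phi(c\times[-1,1])$ indexed by the prebranches $c$. The abstract dual graph $G_N$ records exactly this pattern: its vertices are the pieces and its edges are the gluing annuli, so that $\operatorname{rank}H_1(G_N)$ equals the number of edges minus the number of pieces plus the number of components, i.e.\ the number of independent cycles in the assembly. The first step is to fix a spanning forest $T$ of $G_N$, whose complement consists of exactly $\operatorname{rank}H_1(G_N)$ edges.

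I would then present the splitting $N=V\cup_S W$ as a handle decomposition relative to the boundary: take $V$ to be $\partial N\times[0,1]$ together with a collection of $1$-handles, and take $W$ to be the dual compression body, which is a handlebody since $\partial_-W=\emptyset$; then $S=\partial_+V=\partial_+W$. Attaching $t$ one-handles to $\partial N\times\{1\}$ gives, from $\chi(\partial_+V)+\chi(\partial_-V)=2\chi(V)$ together with $\chi(\partial N)=2c-2g(\partial N)$ (here $c$ is the number of boundary components), the relation
\[
g(S)=g(\partial N)+\bigl(t-c+1\bigr).
\]
The idea is to choose the $1$-handles so that $c-1$ of them merely connect the components of $\partial N$ (these contribute nothing beyond the sum $g(\partial N)$), while the remaining handles are one tube dual to each non-forest edge of $G_N$, so that $t-c+1=\operatorname{rank}H_1(G_N)$. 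The conceptual point is that each handlebody piece already contributes its own complexity to $g(\partial N)$, so along the forest $T$ no extra handles are needed and only the cycles of $G_N$ force additional tubes.

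The main obstacle is verifying that this choice makes the complementary region $W$ an \emph{honest} handlebody, equivalently that $\operatorname{rank}H_1(G_N)$ extra $1$-handles really do turn $V$ into a compression body whose complement carries no $\partial_-$ part. The delicate feature is that gluing two handlebodies along an annulus need not yield a handlebody, so one cannot simply amalgamate the pieces; instead one must exploit that, once all of $\partial N$ has been collared into $V$, each piece attached along a forest edge collapses into that collar, and the only remaining obstruction to $W$ being a handlebody is homological, detected precisely by the cycle space of $G_N$. Once $W$ is shown to be a handlebody, the displayed count gives $g(N)\le g(S)=\operatorname{rank}H_1(G_N)+g(\partial N)$, whence $eg(N)\le g(N)\le \operatorname{rank}H_1(G_N)+g(\partial N)$. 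Finally, maximizing (respectively minimizing) both sides over $N\in\mathcal N(X)$ and using $\max g(X)=\max_N eg(N)$ and $\min g(X)=\min_N eg(N)$ yields the two asserted inequalities.
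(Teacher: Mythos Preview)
The paper does not give a proof of this statement; it is quoted verbatim from \cite{MO} as a known upper bound, and only the new results (Theorem~\ref{lower} and Theorem~\ref{product}) are proved in Section~3. So there is no argument in the present paper to compare your proposal against.

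On the mathematics of your plan: the strategy of bounding $eg(N)$ by $g(N)$ and then exhibiting a Casson--Gordon splitting built from a boundary collar plus $1$-handles is reasonable, and your count $g(S)=g(\partial N)+(t-c+1)$ is correct once $\partial_+V$ is connected. But the step you yourself flag as ``the main obstacle'' is a genuine gap, and the heuristic you offer for resolving it is wrong. You assert that ``the only remaining obstruction to $W$ being a handlebody is homological, detected precisely by the cycle space of $G_N$''; homology alone never certifies that a $3$-manifold is a handlebody, and in fact your construction already fails in simple cases. Under your reading of $G_N$ (vertices the solid-torus and $I$-bundle pieces, edges the gluing annuli), take $X$ with a single branch $l$ of degree $d(l)=2$ and a single sector $e$ equal to a once-punctured torus. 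Then $G_N$ is a single edge, $\operatorname{rank}H_1(G_N)=0$, and $\partial N$ is a connected genus-$2$ surface, so your recipe uses $t=0$ handles and sets $W\cong N$. But $\pi_1(N)=\langle a,b,\lambda\mid [a,b]=\lambda^{2}\rangle$ has abelianization $\mathbb Z^{2}\oplus\mathbb Z/2$, so $N$ is not a handlebody and the proposed decomposition is not a Heegaard splitting. Thus either $G_N$ in \cite{MO} is not the graph you describe, or the argument there proceeds by a different construction (for instance, by directly embedding $N$ in a closed manifold of the required genus rather than via $g(N)$); in either case the plan as written does not go through.
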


\subsection{Lower bound for the maximum and minimum genus}

In the following theorem, we give a lower bound for the maximum and minimum genus of a multibranched surface.
For a union $S$ of closed orientable surfaces, we denote the sum of genus of each closed orientable surface by $g(S)$.

\begin{theorem}\label{lower}
For a regular multibranched surface $X$, we have the following inequalities.
\begin{eqnarray}
\min g(X) &\ge& rank H_1(X) - \max_{N \in \mathcal{N}(X)} g(\partial N)\\
\max g(X) &\ge& rank H_1(X) - \min_{N \in \mathcal{N}(X)} g(\partial N)
\end{eqnarray}
\end{theorem}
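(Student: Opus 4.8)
The plan is to reduce both inequalities to the single estimate
\[
eg(N) \ge \operatorname{rank} H_1(X) - g(\partial N) \qquad \text{for every } N \in \mathcal{N}(X).
\]
Granting this, the two assertions follow formally, since the estimate holds for \emph{every} neighborhood: taking the minimum over $N \in \mathcal{N}(X)$ of the left-hand side gives $\min g(X) \ge \operatorname{rank} H_1(X) - \max_{N} g(\partial N)$, and taking the maximum gives $\max g(X) \ge \operatorname{rank} H_1(X) - \min_{N} g(\partial N)$. To set up the estimate I would first record that, by the very construction of $N = N(X;\mathcal{P},\mathcal{S})$ from the solid tori $l \times D^2$ and the sector bundles $e \times [-1,1]$, the manifold $N$ deformation retracts onto a spine homeomorphic to $X$, namely the union of the cores of the solid tori with the mid-surfaces $e \times \{0\}$, glued exactly as in $L \cup_\phi E$. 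Hence $H_1(N) \cong H_1(X)$, and in particular $\operatorname{rank} H_1(N) = \operatorname{rank} H_1(X)$ independently of $\mathcal{P}$ and $\mathcal{S}$.

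Next I would prove the estimate. Fix $N$ and let $M$ be an arbitrary closed orientable $3$-manifold containing $N$, with inclusion $i\colon N \hookrightarrow M$; since $eg(N) = \min_M g(M)$, it is enough to show $g(M) \ge \operatorname{rank} H_1(N) - g(\partial N)$ for every such $M$. A genus-$g$ Heegaard splitting exhibits $M$ with a handle decomposition having only $g$ one-handles, which gives the standard inequality $g(M) \ge \operatorname{rank} H_1(M;\mathbb{Q})$. So it suffices to bound $\operatorname{rank} H_1(M;\mathbb{Q})$ from below through the inclusion-induced map, using
\[
\operatorname{rank} H_1(M;\mathbb{Q}) \ge \operatorname{rank}(\operatorname{im} i_*) = \operatorname{rank} H_1(N) - \dim_{\mathbb{Q}} \ker i_* .
\]

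The heart of the argument is thus the claim that $\dim_{\mathbb{Q}} \ker i_* \le g(\partial N)$, and this is where I expect the real work to lie. I would prove it in two steps. First, $\ker i_* \subseteq \operatorname{im}\bigl(H_1(\partial N;\mathbb{Q}) \to H_1(N;\mathbb{Q})\bigr)$: if a $1$-cycle $\gamma$ lying in the interior of $N$ bounds a rational $2$-chain $D$ in $M$, then, taking $D$ transverse to $\partial N$ and setting $D_N = D \cap N$, one has $\partial D_N = \gamma - c$ with $c = D \cap \partial N$ a $1$-cycle on $\partial N$, so that $[\gamma] = [c]$ already lies in the image of $H_1(\partial N;\mathbb{Q})$. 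Second, by the half-lives-half-dies principle applied to the compact orientable $3$-manifold $N$, this image has dimension exactly $\tfrac12 \dim_{\mathbb{Q}} H_1(\partial N;\mathbb{Q}) = g(\partial N)$, where $g(\partial N)$ is the total genus. Combining the two steps yields $\dim_{\mathbb{Q}} \ker i_* \le g(\partial N)$, hence $g(M) \ge \operatorname{rank} H_1(N) - g(\partial N) = \operatorname{rank} H_1(X) - g(\partial N)$, and minimizing over $M$ gives the estimate for $eg(N)$. The only subtlety to watch is that the bound must hold uniformly over all closed orientable $M \supseteq N$; it does, because the argument uses nothing about $M$ beyond the inclusion of $N$ and the generic Heegaard-genus inequality, so the deepest input remains the interplay between the $2$-chain argument and half-lives-half-dies.
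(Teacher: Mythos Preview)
Your proposal is correct and follows essentially the same strategy as the paper: both reduce the theorem to the single estimate $eg(N)\ge\operatorname{rank}H_1(X)-g(\partial N)$ for every $N\in\mathcal{N}(X)$ (the paper isolates this as a lemma), proved via $g(M)\ge\operatorname{rank}H_1(M)$ together with half-lives--half-dies. The only cosmetic difference is in the homological packaging of that estimate: the paper uses Mayer--Vietoris for $M=N\cup_{\partial N} Y$ and applies half-lives--half-dies to the complement $Y$, whereas you bound $\ker\bigl(H_1(N)\to H_1(M)\bigr)$ directly by the image of $H_1(\partial N)$ and apply half-lives--half-dies to $N$ itself---these are equivalent.
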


By Theorem \ref{lower}, we obtain the following inequality.

\begin{corollary}\label{3-sphere}
Let $X$ be a regular multibranched surface.
Suppose that $X$ can be embedded in $S^3$.
Then we have
\[
\max_{N \in \mathcal{N}(X)} g(\partial N) \ge rank H_1(X)
\]
\end{corollary}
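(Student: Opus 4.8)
The plan is to deduce the corollary directly from inequality~(1) of Theorem~\ref{lower}, by showing that the embedding hypothesis forces $\min g(X)=0$. Inequality~(1) states
\[
\min g(X) \ge rank H_1(X) - \max_{N \in \mathcal{N}(X)} g(\partial N),
\]
which we may rearrange as
\[
\max_{N \in \mathcal{N}(X)} g(\partial N) \ge rank H_1(X) - \min g(X).
\]
So the whole statement reduces to the claim that $\min g(X)=0$ whenever $X$ embeds in $S^3$.

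To establish that claim, I would start from an embedding $X \hookrightarrow S^3$ and pass to a regular neighborhood $N$ of $X$ in $S^3$. This regular neighborhood is a compact orientable $3$-manifold, and by the construction of neighborhoods recalled in the earlier subsection it is realized as $N(X;\mathcal{P},\mathcal{S})$ for the circular permutation system $\mathcal{P}$ and slope system $\mathcal{S}$ that the ambient embedding induces along each branch of $X$; hence $N \in \mathcal{N}(X)$. Since $N$ is then a submanifold of the closed orientable $3$-manifold $S^3$, whose Heegaard genus is $g(S^3)=0$, the definition of embeddable genus gives $eg(N) \le g(S^3)=0$, and nonnegativity of genus forces $eg(N)=0$. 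Consequently $\min g(X)=\min\{eg(N') \mid N' \in \mathcal{N}(X)\}=0$.

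Substituting $\min g(X)=0$ into the rearranged inequality yields $\max_{N \in \mathcal{N}(X)} g(\partial N) \ge rank H_1(X)$, which is the desired conclusion. The only delicate point is the verification that the regular neighborhood coming from an $S^3$-embedding genuinely belongs to $\mathcal{N}(X)$, i.e.\ that it corresponds to an admissible pair of permutation and slope systems; I expect this to be the main (and essentially the only) obstacle, since everything else is a direct substitution. It is resolved by invoking the uniqueness up to homeomorphism of $N(X;\mathcal{P},\mathcal{S})$ together with the embeddability criterion recalled above, under which a regular multibranched surface and its ambient neighborhood are exactly captured by the family $\mathcal{N}(X)$.
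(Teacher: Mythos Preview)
Your proposal is correct and is exactly the argument the paper intends: the corollary is stated immediately after Theorem~\ref{lower} with the remark ``By Theorem~\ref{lower}, we obtain the following inequality,'' and your deduction via $\min g(X)=0$ from an $S^3$-embedding is precisely how inequality~(1) specializes. The only detail you flag---that the regular neighborhood of $X$ in $S^3$ lies in $\mathcal{N}(X)$---is implicit in the paper's setup (and in the cited embeddability criterion), so there is no gap.
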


Since both sides of the inequality in Corollary \ref{3-sphere} can be straightly calculated, this would be a criterion for a multibranched surface to be embedded in $S^3$.

\subsection{On the genera of multibranched surfaces of (graphs)$\times S^1$}

For a graph $G$, we obtain a regular multibranched surface by taking a product with $S^1$, that is, for each vertex $v_i$ of $G$, $v_i\times S^1$ forms a loop and for each edge $e_j$ of $G$, $e_j\times S^1$ forms an annulus.
We consider the genus of a regular multibranched surface which forms $G\times S^1$, and by using Theorem \ref{lower} show the following theorem which is an interplay of the genus of a graph $G$ and the genus of a multibranched surface $G\times S^1$.

The {\em minimum genus} $\min g(G)$ of a graph $G$ is defined as the minimal genus of closed orientable surfaces in which $G$ can be embedded.
We note that if a graph $G$ is embedded in a closed orientable surface $F$ with $g(F)=\min g(G)$, then $F-G$ consists of open disks.
The {\em maximum genus} $\max g(G)$ of a graph $G$ is defined as the maximal genus of closed orientable surfaces in which $G$ can be embedded and the complement of $G$ consists of open disks.
We remark that Xuong and Nebesk\'{y} determined the maximum genus of a graph by a completely combinatorial formula ({\cite[Theorem 3]{X}}, {\cite[Theorem 2]{N81}}).

At a glance, it seems to be difficult to determine the minimum and maximum genus of a given graph.
However, it can be combinatorially determined in principle.
Suppose that a graph $G$ is embedded in a closed orientable surface $F$ so that $F-G$ consists of open disks.
Then, the genus of a regular neighborhood $N(G;F)$ coincides with that of $F$, and 
 by the orientation of $F$, a rotation system of edges which are incident to $v$ is fixed for each vertex $v$ of $G$.
We remark that a rotation sysytem and a circular permutation system are identical concept.
Conversely, if a rotation sysytem is given for each vertex $v$ of $G$, then by assigning a disk and bands to the vertex $v$ and edges which are incident to $v$ respectively, we obtain an orientable disk-band surface $N$ up to homeomorphism.
Then, by capping $N$ off by open disks, we obtain a closed orientable surface $F$ in which $G$ is embedded.

By the above observation, we have the following.
\[
\min g(G)=\min \{ g(N) \mid N \in \mathcal{N}(G) \}
\]
\[
\max g(G)=\max \{ g(N) \mid N \in \mathcal{N}(G) \},
\]
where $\mathcal{N}(G)$ denotes the set of orientable disk-band surfaces for $G$.

\begin{theorem}\label{product}
For a graph $G$, we have the following equalities.
\begin{eqnarray}
\min g(G\times S^1) &=& 2 \min g(G)\\
\max g(G\times S^1) &=& 2 \max g(G)
\end{eqnarray}
\end{theorem}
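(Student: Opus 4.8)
The plan is to translate both genera into face counts of cellular embeddings of $G$, and then bracket them between the lower estimate of Theorem~\ref{lower} and an explicit embedding construction for the upper estimate. The homological bookkeeping comes first: for a connected graph $G$ with $V$ vertices and $E$ edges, the K\"unneth formula gives $rank H_1(G\times S^1)=\beta_1(G)+1=E-V+2$. If $G$ is cellularly embedded in a closed orientable surface of genus $g$ with $f$ faces, then $V-E+f=2-2g$, so $2g=rank H_1(G\times S^1)-f$; hence minimizing $g$ means maximizing $f$ and maximizing $g$ means minimizing $f$. Writing $f_{\min},f_{\max}$ for the extreme face numbers over cellular embeddings, this reads $2\min g(G)=rank H_1(G\times S^1)-f_{\max}$ and $2\max g(G)=rank H_1(G\times S^1)-f_{\min}$. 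So the theorem will follow once I identify $\min_{N}g(\partial N)$ with $f_{\min}$ and $\max_{N}g(\partial N)$ with $f_{\max}$ (to feed Theorem~\ref{lower}) and produce neighborhoods whose embeddable genus realizes the opposite inequalities.

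Next I compute boundaries. Every $N\in\mathcal N(X)$ deformation retracts onto $X=G\times S^1$ and is built from solid tori and thickened annuli glued along annuli and tori, all of Euler characteristic $0$; hence $\chi(N)=\chi(X)=0$ and $\chi(\partial N)=2\chi(N)=0$. Since $\sum_i(2-2g_i)=0$ over the components of $\partial N$, I get $g(\partial N)=\#\{\text{components of }\partial N\}$ for \emph{every} neighborhood. For a product neighborhood $N_{\mathcal P}\times S^1$ --- where $N_{\mathcal P}$ is the orientable disk--band surface carried by a rotation system $\mathcal P$ --- one has $\partial(N_{\mathcal P}\times S^1)=\partial N_{\mathcal P}\times S^1$, a disjoint union of one torus per boundary circle of $N_{\mathcal P}$, i.e.\ one per face of the embedding associated with $\mathcal P$. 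I would then argue that the number of components of $\partial N$ depends only on $\mathcal P$ and not on the slope system: since the prebranches attached to a branch $l$ are all glued along parallel copies of a single curve $k_l$, changing the slope $\mathcal S_l$ only reframes $\partial(l\times D^2)$ by a power of the meridional Dehn twist and leaves unchanged the cyclic pattern in which the free annuli on the boundary tori chain through the plumbing regions. Granting this, $g(\partial N)=f(\mathcal P)$ for all $N$, so $\min_{N}g(\partial N)=f_{\min}$ and $\max_{N}g(\partial N)=f_{\max}$, and Theorem~\ref{lower} yields $\max g(G\times S^1)\ge 2\max g(G)$ and $\min g(G\times S^1)\ge 2\min g(G)$.

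For the reverse inequalities I must bound $eg(N)$ from above by $2g(N_{\mathcal P})$. The minimum-genus side needs only one witness: take $\mathcal P$ realizing $\min g(G)$, so $N_{\mathcal P}$ has genus $\min g(G)$; then an economical closed filling of $N_{\mathcal P}\times S^1$ (in the planar case simply $\mathbb{R}^2\times S^1\subset S^3$) gives $\min g(G\times S^1)\le 2\min g(G)$. The maximum-genus side is the crux: I need $eg(N)\le 2\max g(G)$ for \emph{all} neighborhoods, which by the upper-bound theorems above reduces to building, for each $\mathcal P$ and each slope system $\mathcal S$, a closed orientable $3$-manifold containing $N(X;\mathcal P,\mathcal S)$ of Heegaard genus at most $2g(N_{\mathcal P})$. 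Here the obvious filling $F\times S^1$ only gives genus $2g(N_{\mathcal P})+1$, so one must fill along the fiber slope to save a handle.

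The main obstacle, shared by the upper bound for $\max g$ and the lower bound for $\min g$, is exactly the slope system: it has no graph-theoretic counterpart, and the real content of the theorem is that neither $eg(N)$ nor the component count of $\partial N$ sees it. Thus the genuine work is to show the framing freedom encoded by $\mathcal S$ can always be absorbed into the ambient manifold (for the upper bound) and that it cannot raise $\#\{\text{components of }\partial N\}$ above $f_{\max}$ (for the lower bound), so that all genus information is already recorded by the rotation systems, i.e.\ by the cellular embeddings of $G$.
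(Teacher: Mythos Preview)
Your outline is largely sound, and your lower-bound route via $g(\partial N)=f(\mathcal P)$ together with Theorem~\ref{lower} is actually cleaner than the paper's: the paper passes through a one-face minor $G'_{\mathcal P}$ of $G_{\mathcal P}$ and applies Lemma~\ref{fundamental} to $X'_{\mathcal P}=G'_{\mathcal P}\times S^1$, whereas your Euler-characteristic computation $2g(G_{\mathcal P})=\operatorname{rank}H_1(X)-f(\mathcal P)$ applies Lemma~\ref{fundamental} directly to $X_{\mathcal P}$. Both arrive at $eg(N(X;\mathcal P))\ge 2g(G_{\mathcal P})$.

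The gap is that you have inflated the slope system into ``the main obstacle'' when in fact it evaporates. Every sector of $G\times S^1$ is an annulus, so each prebranch has $d(c)=1$ and hence $d(l)=1$ for every branch; the slope $\mathcal S_l$ is therefore an integer $p$, and the curve $k_l$ has slope $p/1$ on $\partial(l\times D^2)$. Your own observation that two such slopes differ by a power of the meridional Dehn twist is exactly the point, but you stop too early: that Dehn twist \emph{extends over the solid torus} $l\times D^2$, so the two neighborhoods are homeomorphic, not merely equal in boundary-component count. Thus $\mathcal N(X)$ is parametrized by rotation systems alone, every $N$ is $N_{\mathcal P}\times S^1$ for some $\mathcal P$, and there is nothing left to ``absorb into the ambient manifold''. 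The paper dispatches this in one line (``$N$ does not depend on a slope system since $d(l)=1$'').

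The remaining gap is the upper bound $eg(N(X;\mathcal P))\le 2g(G_{\mathcal P})$, which you only sketch. Your instinct to ``fill along the fiber slope to save a handle'' is correct, and the paper carries it out: embed $X_{\mathcal P}$ in $F\times S^1$ with $g(F)=g(G_{\mathcal P})$, choose $p\in F\setminus G_{\mathcal P}$, and perform the Dehn filling on $F\times S^1\setminus \operatorname{int}N(p\times S^1)$ that makes the fiber $q\times S^1$ bound a meridian disk. Cutting along the $2g$ annuli $a_i\times S^1,\ b_i\times S^1$ (where $a_i,b_i$ cut $F\setminus\operatorname{int}N(p)$ to a disk) shows the result is $\#_{2g}(S^2\times S^1)$, of Heegaard genus $2g$; this is the paper's Lemma~\ref{Dehn}. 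Since $p\notin G_{\mathcal P}$, the surgery is disjoint from $X_{\mathcal P}$, giving the required embedding. This single construction handles both the $\min$ and $\max$ upper bounds simultaneously (you do not need a separate ``economical filling'' for the planar case).
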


\begin{remark}
In {\cite[Corollary 1.2]{T}}, it was shown that the minimal number $\dim H_1(M ;F)$ for a closed orientable 3-manifolds $M$ containing $G\times S^1$ equals to $2\min g(G)$, where $F=\Bbb{Z}_p$ or $\Bbb{Q}$.
It is well-known that $g(M)\ge \dim H_1(M ;F)$.
Hence the inequality $\min g(G\times S^1) \ge 2 \min g(G)$ of the first equality in Theorem \ref{product} holds.
\end{remark}

\subsection{Rank of the first homology group}

In this subsection, we assume that all sectors are orientable.
For a branch $l$ and a sector $s$ of a regular multibranched surface $X$, we define $d(l;s)=\sum_{c \subset \partial s} od(c)$, where $c$ is a prebranch attached to $l$.
The multibranched surface obtained by the removing a open disk from each sector is denoted by $\dot{X}$.

\begin{theorem}[{\cite[Theorem 4.1]{MO}}]\label{homology}
Let $X$ be a regular multibranched surface with $\mathcal{L}(X)=\left\{ l_1, \ldots, l_n \right\}$, $\mathcal{E}(X)=\left\{ s_1, \ldots, s_m \right\}$.
Then, 
\[
H_1(X) = \left[ l_1, \ldots, l_n : \sum_{k=1}^{n}d(l_k;s_1)l_k, \ldots, \sum_{k=1}^{n}d(l_k;s_m)l_k \right] \oplus \mathbb{Z}^{r'(X)} 
\] 
where $r'(X)=rank H_1(\dot{X})-n$.
\end{theorem}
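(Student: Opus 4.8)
The plan is to compute $H_1(X)$ by first relating it to the homology of the punctured surface $\dot X$ and then showing that $H_1(\dot X)$ splits off the branches as a free direct summand. Recall that $X$ is recovered from $\dot X$ by re-attaching one $2$-disk $D_i$ along the boundary circle $\gamma_i$ of the open disk that was removed from each sector $s_i$, so $X = \dot X \cup \bigsqcup_{i=1}^m D_i$. The long exact sequence of the pair $(X,\dot X)$ together with excision, which gives $H_\ast(X,\dot X)\cong\bigoplus_i H_\ast(D_i,\partial D_i)\cong \mathbb{Z}^m$ in degree $2$ and $0$ in degree $1$, yields the exact sequence $\mathbb{Z}^m \xrightarrow{\partial} H_1(\dot X)\to H_1(X)\to 0$, where $\partial$ sends the $i$-th generator to $[\gamma_i]$. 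Hence $H_1(X)\cong H_1(\dot X)/\langle[\gamma_1],\dots,[\gamma_m]\rangle$. This first step is routine.

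The second step identifies the classes $[\gamma_i]$. Since $s_i$ is orientable, the total boundary of the punctured sector $\dot s_i$ is null-homologous, so in $H_1(\dot s_i)$ we have $[\gamma_i]=-\sum_{c\subset\partial s_i}[c]$, the sum running over the prebranches $c$ of $s_i$. Under the gluing $\phi$, a prebranch $c$ attached to a branch $l_k$ is mapped onto $l_k$ by a covering of oriented degree $od(c)$, so its class becomes $(\phi|_c)_\ast[c]=od(c)[l_k]$ in $H_1(\dot X)$. Summing over the prebranches of $s_i$ and regrouping by branch, and using $d(l_k;s_i)=\sum_{c\subset\partial s_i}od(c)$, gives $[\gamma_i]=-\sum_{k=1}^n d(l_k;s_i)\,[l_k]$ in $H_1(\dot X)$. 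In particular every relator $[\gamma_i]$ is a $\mathbb{Z}$-combination of the branch classes $[l_1],\dots,[l_n]$.

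The crux is the third step: showing $H_1(\dot X)$ is free and that $[l_1],\dots,[l_n]$ generate a direct summand isomorphic to $\mathbb{Z}^n$. The key lemma is that each punctured sector $\dot s_i$, of genus $g_i$ and with $b_i$ prebranches, deformation retracts onto a graph (spine) $W_i$ that contains all of its prebranches $\partial s_i=C_1\sqcup\cdots\sqcup C_{b_i}$, with the retraction fixing $W_i$ pointwise. This is where removing a disk is essential: the free boundary component $\gamma_i$ provides the room to collapse $\dot s_i$ onto such a spine, whereas for a closed-up planar sector no spine can contain all its boundary circles; the punctured sector, of first Betti number $2g_i+b_i\ge b_i$, always admits one, which I would exhibit from a handle decomposition of $\dot s_i$ having $\gamma_i$ as its single top boundary and $C_1\sqcup\cdots\sqcup C_{b_i}$ at the bottom. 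Because these retractions fix the prebranches, they are compatible with the identifications made by $\phi$ and glue, together with the identity on $L$, to a deformation retraction of $\dot X$ onto the graph $\hat W=L\cup_\phi\bigl(\bigsqcup_i W_i\bigr)$. Thus $H_1(\dot X)\cong H_1(\hat W)$ is free abelian. Since $L=\bigsqcup_k l_k$ is a disjoint union of embedded circles in $\hat W$, a spanning forest can be chosen to contain $l_k$ minus one edge for each $k$; the resulting fundamental cycles include $[l_1],\dots,[l_n]$, exhibiting them as part of a basis. Hence $H_1(\dot X)=\langle[l_1],\dots,[l_n]\rangle\oplus F$ with $\langle[l_1],\dots,[l_n]\rangle\cong\mathbb{Z}^n$ and $F\cong\mathbb{Z}^{\,\operatorname{rank}H_1(\dot X)-n}=\mathbb{Z}^{r'(X)}$.

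Finally I would assemble the pieces. By the second step the relators $[\gamma_i]$ lie entirely in the summand $\langle[l_1],\dots,[l_n]\rangle$, so quotienting $H_1(\dot X)=\langle[l_1],\dots,[l_n]\rangle\oplus F$ by them leaves $F$ untouched and gives
\[
H_1(X)\cong\Bigl(\mathbb{Z}^n\big/\big\langle\textstyle\sum_k d(l_k;s_i)[l_k]:1\le i\le m\big\rangle\Bigr)\oplus\mathbb{Z}^{r'(X)},
\]
which is precisely $\bigl[\,l_1,\dots,l_n:\sum_k d(l_k;s_1)l_k,\dots,\sum_k d(l_k;s_m)l_k\,\bigr]\oplus\mathbb{Z}^{r'(X)}$. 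I expect the only genuinely delicate point to be the spine lemma of the third step, together with checking that the sector retractions are compatible with $\phi$ and glue; the homological bookkeeping in the remaining steps is straightforward once orientations and oriented degrees are tracked carefully.
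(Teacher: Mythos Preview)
The paper does not prove this theorem; it is quoted from \cite{MO} without proof, so there is no argument in the present paper to compare your proposal against.

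That said, your approach is sound. The long exact sequence of the pair $(X,\dot X)$ correctly yields $H_1(X)\cong H_1(\dot X)/\langle[\gamma_1],\dots,[\gamma_m]\rangle$; the identification of each $[\gamma_i]$ with $-\sum_k d(l_k;s_i)[l_k]$ via the boundary relation in the orientable punctured sector is correct (the sign is irrelevant for the quotient); and the spine lemma in your third step is valid, since removing the extra disk gives $\dot s_i$ one more boundary component $\gamma_i$, which allows a deformation retraction onto a graph containing all the prebranch circles. The one point worth making explicit is why the fundamental cycle of the deleted edge $e_k$ in your spanning forest is exactly $l_k$: since $l_k\setminus e_k$ lies in the forest and joins the endpoints of $e_k$, and paths in a tree are unique, the fundamental cycle is $(l_k\setminus e_k)\cup e_k=l_k$. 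With that detail spelled out, your argument is complete and is the standard CW-type computation one would expect in the cited source.
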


In Theorem \ref{lower}, we need to calculate the rank of the first homology group of $X$.
By using Theorem \ref{homology}, we have 

\[
rank H_1(X) \ge rank H_1(\dot{X})-n.
\]

\subsection{Example for the equality}

In this subsection, we provide an example which satisfies the equality of Theorem \ref{lower}.

Let $\Gamma$ be a rose with $2n$ petals $(n\ge 1)$, where a {\em rose} is a topological space obtained by gluing $2n$ circles at a single point $p$. 
We consider the multibranched surface $X=\Gamma \times S^1$, where $X$ has single branch $l=p \times S^1$ and $2n$ sectors. 

Let $N_i\in \mathcal{N}(X)$ ($i=1, 2$) be a neighborhood of $X$ which is determined by {the} circular permutation of $l$, where $\Gamma_i \times S^1$ is a spine of $N(\Gamma \times S^1)$.
See Figure \ref{roses}.

\begin{figure}[htbp]\label{roses}
\begin{center}
\includegraphics[trim=0mm 0mm 0mm 0mm, width=.5\linewidth]{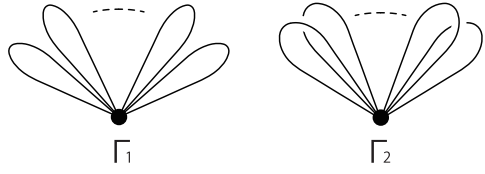}
\end{center}
\caption{Roses $\Gamma_1$ and $\Gamma_2$.}
\end{figure}

Then we have $rank H_1(X) = 2n+1$, $g(\partial N_1) = 2n+1$, $g(\partial N_2) = 1$.
Hence, 

\begin{eqnarray*}
\min g(X) &=& rank H_1(X) - g(\partial N_1)\\
 &=& (2n+1) - (2n+1)\\
 &=& 0\\
\end{eqnarray*}
and 
\begin{eqnarray*} 
\max g(X) &=& rank H_1(X) - g(\partial N_2)\\
&=& (2n+1) - 1\\
&=& 2n
\end{eqnarray*}

\subsection{Example for the inequality}

In this subsection, we provide an example which gives an arbitrary gap on the inequality of Theorem \ref{lower}.

Let $X_1$, $X_2$ be multibranched surfaces.
Take a disk $D_i$ in a sector of $X_i$ for $i=1,2$ and identify them (but do not remove the disk).
Then we have a new multibranched surface $X$ which is denoted by $X_1\#_D X_2$ and we call it a {\em disk sum} (\cite{GGH}).
It is clear that 
\[
rank H_1(X)=rank H_1(X_1) + rank H_1(X_2).
\]

If $N\in \mathcal{N}(X)$, then there exist $N_1\in \mathcal{N}(X_1)$ and $N_2\in \mathcal{N}(X_2)$ such that $N=N_1\#_{\partial} N_2$, where $\#_{\partial}$ means that we glue $N_1$ and $N_2$ along a disk on their boundaries.
It holds that 
\[
g(\partial N)=g(\partial N_1)+g(\partial N_2).
\]

If $N_1$ embed in $M_1$ and $N_2$ in $M_2$, then $N$ can be embedded in $M_1 \# M_2$. From this follows that
\[
\min g(X) \le \min g(X_1) + \min g(X_2)
\]
\[
\max g(X) \le \max g(X_1) + \max g(X_2)
\]

Now if we get an example satisfying $\displaystyle \min g(X) > rank H_1(X) - \max_{N \in \mathcal{N}(X)} g(\partial N)$, then by taking disk sums of $X$ with itself many times, we can get a gap in the inequality as large as we want.
For example, if $X$ is a spine of a lens space, this will do.
That is, if $X$ consists of a single curve as a branch, a unique sector which is a disk and which have an index $p$ around the branch.
In this case, the equality of the sum of Heegaard genus does happen since $\min g(X)=\max g(X)=1$, $rank H_1(X)=0$ and $\displaystyle \max_{N \in \mathcal{N}(X)} g(\partial N)=\min_{N \in \mathcal{N}(X)} g(\partial N)=0$.
Note that if $X_n$ is obtained by taking $n$ disk sums of $X$ with itself, then $g(X_n)=n$, because $X_n$ is the spine of the connected sun of $n$ lens spaces.

It is unknown whether the minimum genus and maximum genus of a multibranched surface are additive under disk sums.
This would follow the following question.

\begin{question}
If $N=N_1\#_{\partial} N_2$, where $N, N_1, N_2$ are compact orientable 3-manifolds with boundary, then $eg(N)=eg(N_1)+eg(N_2)$?
\end{question}

\section{Preliminaries}

\begin{lemma}\label{rank}
Let $M$ be a closed orientable 3-manifold and $F$ be a closed orientable surface which separates $M$ into two 3-submanifolds $M_1$ and $M_2$.
Then we have
\[
rank H_1(M)\ge rank H_1(M_1) +rank H_1(M_2) -rank H_1(F)
\]
\end{lemma}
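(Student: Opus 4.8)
The plan is to run a Mayer--Vietoris argument for the decomposition $M = M_1 \cup_F M_2$ and then track ranks through the resulting exact sequence. First I would thicken the decomposition into an open cover: let $U$ be $M_1$ together with an open collar of $F$ pushed slightly into $M_2$, and let $V$ be $M_2$ together with an open collar of $F$ pushed slightly into $M_1$. Then $U$ deformation retracts onto $M_1$, $V$ onto $M_2$, the intersection $U \cap V$ deformation retracts onto $F$, and $U \cup V = M$. This is exactly the setup needed to apply Mayer--Vietoris.

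The relevant portion of the Mayer--Vietoris sequence (with integer coefficients) is
\[
H_1(F) \xrightarrow{\ \alpha\ } H_1(M_1) \oplus H_1(M_2) \xrightarrow{\ \beta\ } H_1(M),
\]
where $\alpha$ is induced by the two inclusions of $F$ and $\beta$ by the inclusions of $M_1$ and $M_2$. Exactness at the middle term yields $\ker \beta = \Ima \alpha$, hence an isomorphism $\Ima \beta \cong \big( H_1(M_1) \oplus H_1(M_2) \big) / \Ima \alpha$.

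The heart of the argument is then rank bookkeeping. All groups involved are finitely generated, since $M$, $M_1$, $M_2$ and $F$ are compact, so rank is additive along short exact sequences of abelian groups. Applied to the short exact sequence $0 \to \Ima \alpha \to H_1(M_1)\oplus H_1(M_2) \to \Ima \beta \to 0$ this gives
\[
rank \Ima \beta = rank H_1(M_1) + rank H_1(M_2) - rank \Ima \alpha.
\]
Since $\Ima \alpha$ is a quotient of $H_1(F)$ we have $rank \Ima \alpha \le rank H_1(F)$, and since $\Ima \beta$ is a subgroup of $H_1(M)$ we have $rank H_1(M) \ge rank \Ima \beta$. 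Combining these estimates produces exactly
\[
rank H_1(M) \ge rank H_1(M_1) + rank H_1(M_2) - rank H_1(F).
\]

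I expect no serious obstacle here: the only points requiring care are confirming finite generation (so that the ranks are defined and additive on short exact sequences) and correctly identifying the two places where the bound acquires slack, namely $rank \Ima \alpha \le rank H_1(F)$ and $rank H_1(M) \ge rank \Ima \beta$. If one instead wanted an equality-type statement, one would have to control both the cokernel of $\beta$ inside $H_1(M)$ and the kernel of $\alpha$ on $H_1(F)$, but for the stated one-sided inequality these coarse estimates suffice.
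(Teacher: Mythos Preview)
Your argument is correct and essentially identical to the paper's: both run the Mayer--Vietoris sequence for $M = M_1 \cup_F M_2$, use exactness at the middle term to identify $\ker \beta = \Ima \alpha$, and then combine the rank equality $rank\,\Ima\beta = rank\,H_1(M_1) + rank\,H_1(M_2) - rank\,\Ima\alpha$ with the two coarse bounds $rank\,\Ima\alpha \le rank\,H_1(F)$ and $rank\,\Ima\beta \le rank\,H_1(M)$. Your extra care with the collar open cover and finite generation only makes the argument cleaner.
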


\begin{proof}
We consider the following Mayer--Vietoris sequence.
\[
H_1(F) \xrightarrow{i} H_1(M_1) \oplus H_1(M_2) \xrightarrow{j} H_1(M)
\]
By the fundamental theorem on homomorphisms,
\[
H_1(M_1) \oplus H_1(M_2)/\ker j \cong \Ima j
\]
and we have
\[
rank H_1(M_1) \oplus H_1(M_2) - rank \ker j = rank \Ima j
\]
By the exactness of sequence, we have $\Ima i = \ker j $.
Hence we have the following by the above equations.
\begin{eqnarray*}
  rank H_1(M) & \ge &  rank \Ima j \\
   & = & rank H_1(M_1) \oplus H_1(M_2) - rank \ker j \\
   & = & rank H_1(M_1) + rank H_1(M_2) - rank \Ima i \\
   & \ge & rank H_1(M_1) + rank H_1(M_2) - rank H_1(F)
\end{eqnarray*}
\end{proof}

The following lemma is essentially same as {\cite[Theorem 1.3 (a)]{T}} with a slightly different settings.

\begin{lemma}[{\cite[Theorem 1.3 (a)]{T}}]\label{fundamental}
For a regular multibranched surface $X$ and a neighborhood $N \in \mathcal{N}(X)$, we have the following inequality.
\[
eg(N) \ge rank H_1(X) - g(\partial N)
\]
\end{lemma}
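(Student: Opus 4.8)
The plan is to bound $eg(N)$ from below by showing that \emph{every} closed orientable $3$-manifold $M$ into which $N$ embeds satisfies $g(M)\ge rank H_1(X)-g(\partial N)$; taking the minimum over such $M$ then yields the claim. So fix $M$ with $g(M)=eg(N)$ and write $M=N\cup_{\partial N}M'$, where $M'=\overline{M\setminus N}$ is a compact orientable $3$-manifold with $\partial M'=\partial N$. Two standard facts reduce the problem to a single homological estimate. First, the neighborhood $N$ deformation retracts onto its spine $X$ (this is exactly how $N$ is assembled from the solid tori $l\times D^2$ and the $I$-bundles over the sectors), so $rank H_1(N)=rank H_1(X)$. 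Second, as already recalled in the Remark following Theorem \ref{product}, a genus-$g$ Heegaard splitting of a closed orientable $3$-manifold presents $H_1$ on $g$ generators, whence $g(M)\ge rank H_1(M)$. It therefore suffices to prove
\[
rank H_1(M)\ge rank H_1(X)-g(\partial N).
\]

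To this end I would analyze the inclusion-induced map $\alpha\colon H_1(N;\mathbb{Q})\to H_1(M;\mathbb{Q})$. Since $rank H_1(M)\ge\dim\Ima\alpha=rank H_1(X)-\dim\ker\alpha$, the goal becomes $\dim\ker\alpha\le g(\partial N)$. Applying the Mayer--Vietoris sequence of the decomposition $M=N\cup_{\partial N}M'$ (the same sequence that appears in Lemma \ref{rank}), a class $a\in H_1(N;\mathbb{Q})$ lies in $\ker\alpha$ exactly when $(a,0)$ lies in the image of the map $H_1(\partial N;\mathbb{Q})\to H_1(N;\mathbb{Q})\oplus H_1(M';\mathbb{Q})$, $x\mapsto(i_\ast x,-i'_\ast x)$, where $i_\ast,i'_\ast$ are induced by the two inclusions of $\partial N$ into $N$ and into $M'$. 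A short diagram chase then identifies
\[
\ker\alpha=i_\ast\bigl(\ker i'_\ast\bigr),
\]
so in particular $\dim\ker\alpha\le\dim\ker i'_\ast$.

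The decisive input is the half-lives-half-dies principle applied to the compact orientable $3$-manifold $M'$: the kernel of $H_1(\partial M';\mathbb{Q})\to H_1(M';\mathbb{Q})$ has dimension exactly $\tfrac12\dim H_1(\partial M';\mathbb{Q})$. Since $\partial M'=\partial N$ and $\dim H_1(\partial N;\mathbb{Q})=2g(\partial N)$, this gives $\dim\ker i'_\ast=g(\partial N)$, hence $\dim\ker\alpha\le g(\partial N)$, which closes the chain of inequalities. I expect this duality step to be the crux: a direct application of Lemma \ref{rank} with $F=\partial N$ only yields $rank H_1(M)\ge rank H_1(X)-2g(\partial N)$, losing a factor of two, and it is precisely the half-dimensional (Lagrangian) nature of the kernel, coming from Poincar\'e--Lefschetz duality on $M'$, that recovers the sharp coefficient $g(\partial N)$. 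The remaining verifications—that $(N,M')$ form an excisive couple so Mayer--Vietoris applies, that orientability of $M$ passes to $M'$, and the diagram chase above—are routine.
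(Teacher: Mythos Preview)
Your proof is correct and uses the same ingredients as the paper's---the retraction $N\simeq X$, the bound $g(M)\ge rank H_1(M)$, Mayer--Vietoris for the decomposition $M=N\cup_{\partial N}M'$, and the half-lives-half-dies principle on $M'$---but the packaging differs slightly. You bound $\dim\ker\alpha$ via the identification $\ker\alpha=i_\ast(\ker i'_\ast)$ and then invoke half-lives-half-dies in the form $\dim\ker i'_\ast=g(\partial N)$; the paper instead applies Lemma~\ref{rank} verbatim to obtain $rank H_1(M)\ge rank H_1(N)+rank H_1(M')-rank H_1(\partial N)$ and then uses half-lives-half-dies in the equivalent form $rank H_1(M')\ge\tfrac12\,rank H_1(\partial N)$ to kill half of the $-rank H_1(\partial N)$ term. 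So your remark that ``a direct application of Lemma~\ref{rank} \ldots\ only yields $rank H_1(M)\ge rank H_1(X)-2g(\partial N)$'' is true only if one throws away the $rank H_1(M')$ summand; the paper keeps it and bounds it below, recovering the sharp coefficient by the same duality input you use.
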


\begin{proof}
Suppose that $N$ is embedded in a closed orientable 3-manifold $M$ with $eg(N)=g(M)$.
Then $M$ is separated by $\partial N$ into two 3-submanifolds $N$ and say $Y$.
By Lemma \ref{rank}, we have
\[
rank H_1(M)\ge rank H_1(N) +rank H_1(Y) -rank H_1(\partial N)
\]
By an abelianization, we have
\[
g(M)\ge rank \pi_1(M) \ge rank H_1(M)
\]
On the other hand, since it holds generally that $rank H_1(Y) \ge \frac{1}{2} rank H_1(\partial N)$,
\begin{eqnarray*}
rank H_1(N) +rank H_1(Y) -rank H_1(\partial N) &\ge& rank H_1(N) - \frac{1}{2} rank H_1(\partial N)\\
&=& rank H_1(X) - g(\partial N)
\end{eqnarray*}
Hence we have the inequality of Lemma \ref{fundamental}.
\end{proof}

\begin{lemma}\label{Dehn}
Let $F$ be a closed orientable surface of genus $g$ and $p$ be a point in $F$.
Remove $int N(p\times S^1)$ from $F\times S^1$, and glue a solid torus $V$ with the remainder along their boundaries so that the curve $q\times S^1$ bounds a meridian disk of $V$, where $q$ is a point in $\partial N(p\times S^1)$.
Then the resultant 3-manifold $M$ has a Heegaard genus $2g$.
\end{lemma}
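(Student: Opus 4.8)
The plan is to prove the two inequalities $g(M)\ge 2g$ and $g(M)\le 2g$ separately. Throughout I write $\hat F = F\setminus \operatorname{int}N(p;F)$ for the once-punctured genus $g$ surface, so that $F\times S^1\setminus \operatorname{int}N(p\times S^1)=\hat F\times S^1$ is a $3$--manifold whose boundary is the torus $T=\partial\hat F\times S^1$, and $M=(\hat F\times S^1)\cup_T V$, where the meridian disk of $V$ is bounded by a fibre $\{q\}\times S^1\subset T$.

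For the lower bound I would compute $H_1(M)$ and invoke the standard fact that a genus $h$ Heegaard splitting makes $\pi_1$ generated by $h$ elements, whence $g(M)\ge \operatorname{rank}H_1(M)$. A Mayer--Vietoris computation for $M=(\hat F\times S^1)\cup_T V$ does the job: since $\hat F\simeq R_{2g}$ we have $H_1(\hat F\times S^1)\cong\mathbb Z^{2g}\oplus\mathbb Z$, the boundary curve $\partial\hat F$ is null-homologous in $\hat F$, and the gluing sends the fibre class to $\bigl(\text{generator of the }S^1\text{ factor},\ \text{meridian}=0\text{ in }V\bigr)$ and sends $\partial\hat F\times\{pt\}$ to $\bigl(0,\ \text{core of }V\bigr)$. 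Thus both circle directions are absorbed and $H_1(M)\cong H_1(\hat F)\cong\mathbb Z^{2g}$, which is torsion free, so $g(M)\ge 2g$.

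For the upper bound I would exhibit a genus $2g$ Heegaard surface, and the cleanest route is to identify $M\cong \#_{2g}(S^1\times S^2)$. Choose $2g$ disjoint properly embedded arcs $a_1,\dots,a_{2g}$ in $\hat F$ cutting it into a disk. Each vertical annulus $a_i\times S^1$ has its two boundary circles equal to fibres on $T$, and since every fibre bounds a meridian disk of $V$, capping both ends produces an embedded $2$--sphere $S_i\subset M$. The $S_i$ are disjoint, and each is non-separating: a curve $b_i\subset\hat F$ dual to $a_i$ gives a loop $b_i\times\{pt\}$ meeting $S_i$ exactly once. Cutting $\hat F\times S^1$ along the annuli returns $D^2\times S^1$ and cutting $V$ along the capping disks returns balls, so cutting $M$ along $S_1\cup\cdots\cup S_{2g}$ yields a punctured $3$--sphere and reassembling exhibits $M$ as $\#_{2g}(S^1\times S^2)$. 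This manifold carries the obvious genus $2g$ splitting obtained by summing the genus one splittings of the $S^1\times S^2$ factors, giving $g(M)\le 2g$; together with the lower bound, $g(M)=2g$.

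The main obstacle is the final identification: verifying rigorously that cutting $M$ along the $2g$ spheres produces a punctured $S^3$, i.e.\ that no extra prime summand appears. This requires tracking how the solid torus $D^2\times S^1$ coming from $\hat F$ and the ball-pieces coming from $V$ glue along the scars of the arcs $a_i$ and the subarcs of $\partial\hat F$. Alternatively one can finesse this by observing that the same van Kampen computation gives $\pi_1(M)$ free of rank $2g$, and appealing to the classification of closed orientable $3$--manifolds with free fundamental group to conclude $M\cong \#_{2g}(S^1\times S^2)$ directly, with the explicit spheres merely confirming the decomposition.
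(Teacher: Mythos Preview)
Your proposal is correct and follows essentially the same route as the paper: both identify $M\cong\#_{2g}(S^1\times S^2)$ by capping the $2g$ vertical annuli $a_i\times S^1$ with meridian disks of $V$ to obtain a non-separating sphere system, then use the known Heegaard genus of that connected sum. The paper dispatches the step you flag as the ``main obstacle'' directly---it observes that the cut manifold is $D\times S^1$ with 2-handles attached along longitudinal annuli, hence a punctured $S^3$---so your separate $H_1$ lower bound and the $\pi_1$ workaround are not needed.
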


\begin{proof}
Let $a_1,b_1,\ldots, a_g,b_g$ be $2g$ arcs properly embedded in $F-int N(p)$ which cut $F-int N(p)$ into a single disk $D$.
To consider the resultant 3-manifold $M$, first we cut $F\times S^1-int N(p\times S^1)$ along $2g$ annuli $a_1\times S^1,b_1\times S^1,\ldots, a_g\times S^1,b_g\times S^1$.
Then we obtain a solid torus $D\times S^1$.
Since $\partial (a_1\times S^1),\partial (b_1\times S^1),\ldots, \partial (a_g\times S^1),\partial (b_g\times S^1)$ bound meridian disks in $V$, those annuli result 2-spheres, say $A_1,B_1,\ldots,A_g,B_g$, in the resultant 3-manifold $M$.
Since $a_1,b_1,\ldots, a_g,b_g$ are non-separating in $F-int N(p)$, $A_1,B_1,\ldots,A_g,B_g$ are also non-separating in $M$.
Let $M'$ be a 3-manifold obtained from $M$ by cutting along $A_1,B_1,\ldots,A_g,B_g$.
$M'$ can be obtained from $D\times S^1$ by gluing $2g$ 2-handles along $2g$ annuli  $\partial V-(a_1\times S^1\cup b_1\times S^1\cup \ldots \cup a_g\times S^1\cup b_g\times S^1)$.
We note that each of the $2g$ annuli goes around $\partial (D\times S^1)$ longitudinally once.
Hence $M'$ is a $2g$ punctured 3-sphere.
Now we glue along each copies of $A_1,B_1,\ldots,A_g,B_g$ and obtain $M=(S^2\times S^1)\#\cdots \#(S^2\times S^1)$ ($2g$ sums).
Thus $g(M)=g(S^2\times S^1)+\cdots + g(S^2\times S^1)=2g$.
\end{proof}

\section{Proofs}

\begin{proof}[Proof of Theorem \ref{lower}]
(1) Let $N \in \mathcal{N}(X)$ be a neighborhood of $X$ with $\min g(X) = eg(N)$.
By Lemma \ref{fundamental}, 
\begin{eqnarray*}
eg(N) &\ge& rank H_1(X) - g(\partial N)\\
&\ge& rank H_1(X) - \max_{N \in \mathcal{N}(X)} g(\partial N)
\end{eqnarray*}
Hence we have the inequality (1) of Theorem \ref{lower}.

(2) Let $N \in \mathcal{N}(X)$ be a neighborhood of $X$ with 
$\min_{N \in \mathcal{N}(X)} g(\partial N) = g(\partial N)$
By Lemma \ref{fundamental}, 
\begin{eqnarray*}
rank H_1(X) - g(\partial N) &\le& eg(N)\\
&\le& \max g(X) 
\end{eqnarray*}
Hence we have the inequality (2) of Theorem \ref{lower}.
\end{proof}

\begin{proof}[Proof of Theorem \ref{product}]
Let $G$ be a graph and put $X=G\times S^1$.
We remark that a neighborhood $N\in \mathcal{N}(X)$ does not depend on a slope system and depends only on a permutation system since $d(l)=1$ for each branch $l$.
In the following, we will show 
\[
eg(N(X;\mathcal{P})) = 2 g(G_{\mathcal{P}}),
\]
where $G_{\mathcal{P}}$ denotes a graph $G$ equiped with the same permutation system $\mathcal{P}$ of $X$.

($\le$)
Let $F$ be a closed orientable surface of genus $g(G_{\mathcal{P}})$.
Then $G_{\mathcal{P}}$ can be embedded in $F$ and $X_{\mathcal{P}}$ can be embedded in $F\times S^1$, where $X_{\mathcal{P}}$ denotes a multibranched surface $X$ equiped with the same permutation system $\mathcal{P}$ of $G_{\mathcal{P}}$.
We remark that $g(F\times S^1)=2 g(G_{\mathcal{P}}) +1$ (\cite{TO}).
Let $p$ be a point in $F-G_{\mathcal{P}}$.
Then by the Dehn surgery along $p\times S^1$ as in Lemma \ref{Dehn}, we obtain a closed orientable 3-manifold $M$ in which $X_{\mathcal{P}}$ is embedded.
By Lemma \ref{Dehn}, $g(M)=2g(G_{\mathcal{P}})$ and we have $eg(N(X;\mathcal{P})) \le 2 g(G_{\mathcal{P}})$.

($\ge$)
Let $M$ be a closed orientable 3-manifold of genus $eg(N(X;\mathcal{P}))$ in which $X_{\mathcal{P}}$ can be embedded.
We embedd $G_{\mathcal{P}}$ with the same permutation system $\mathcal{P}$ of $X_{\mathcal{P}}$ in a closed orientable surface $F$ so that $g(F)= g(G_{\mathcal{P}})$.
Then $F-G_{\mathcal{P}}$ consists of open disks.
By removing some edges of $G_{\mathcal{P}}$, there exsits a minor $G'_{\mathcal{P}}$ of $G_{\mathcal{P}}$ such that $F-G'_{\mathcal{P}}$ consists of a single open disk.
Let $X'_{\mathcal{P}}$ be the multibranched surface corresponding to $G'_{\mathcal{P}}\times S^1$.
By Lemma \ref{fundamental},
\[
eg(N(X'_{\mathcal{P}})) \ge rank H_1(X'_{\mathcal{P}}) - g(\partial N(X'_{\mathcal{P}}))
\]
Since $N(X'_{\mathcal{P}})$ is a product of a once punctured closed orientable surface of genus $g(F)$ with $S^1$, 
$rank H_1(X'_{\mathcal{P}}) = 2g(F)+1$ and $g(\partial N(X'_{\mathcal{P}}))=1$.
Hence we have
\[
eg(N(X_{\mathcal{P}})) \ge eg(N(X'_{\mathcal{P}})) \ge 2g(F) = 2 g(G_{\mathcal{P}})
\]

Finally by taking the minimum and maximum of the above equality, we have the equalities (3) and (4) of Theorem \ref{product}.
\end{proof}

\bigskip
\noindent{\bf Acknowledgements.}
The authors would like to thank to Arkadiy Skopenkov for informimg us of two related papers.

\bibliographystyle{amsplain}

\begin{thebibliography}{10}

\bibitem{CG}
A. J. Casson, C. McA. Gordon,
{\em Reducing Heegaard splittings},
Topol. Appl. {\bf 27} (1987), 275--283.


\bibitem{GGH}J. C. G\'{o}mez-Larra\~{n}aga, F. Gonz\'{a}lez-Acu\~{n}a, W. Heil, {\em 2-stratifold spines of closed 3-manifolds}, Osaka J. Math. {\bf 57} (2020), 267--277.



\bibitem{MO} S. Matsuzaki, M. Ozawa, {\em Genera and minors of multibranched surfaces}, Topol. Appl. {\bf 230} (2017) 621--638.

\bibitem{N81}
L. Nebesk\'{y},
{\em A new characterization of the maximum genus of a graph},
Czechoslovak Math. J. {\bf 31} (1981), 604--613.

\bibitem{RBS}
D. Repov\u{s}, N. B. Brodskij, A. B. Skopenkov, {\em A classification of 3-thickenings of 2-polyhedra}, Topol. Appl. {\bf 94} (1999), 307--314.


\bibitem{TO} M. Takahashi, M. Ochiai, {\em Heegaard diagrams of torus bundles over $S^1$}, Comment. Math. Univ. St. Paul. {\bf 31} (1982) 63--69. 

\bibitem{T}
D. Tonkonog, {\em Embedding 3-manifolds with boundary into closed 3-manifolds}, Topol. Appl. {\bf 158} (2011), 1157--1162.

\bibitem{X} N. H. Xuong,
{\em How to determine the maximum genus of a graph},
J. Comb. Theory, Series B {\bf 26} (1979), 217--225.


\end{thebibliography}

\end{document}